\newcommand{\N}{\ensuremath{\mathbb{N}}}
\newcommand{\Z}{\ensuremath{\mathbb{Z}}}
\newcommand{\R}{\ensuremath{\mathbb{R}}}
\newcommand{\C}{\ensuremath{\mathbb{C}}}
\newcommand{\nchoosek}[2]{{#1\choose #2}}
\newcommand{\ii}{\mathit{i}}
\newcommand{\eip}[1]{\textnormal{e}^{2\pi\ii{#1}}}
\newcommand{\Spec}{\operatorname{Spec}}
\newtheorem{theorem}{Theorem}[section]
\newtheorem{lemma}[theorem]{Lemma}
\newtheorem{corollary}[theorem]{Corollary}
\newtheorem{proposition}[theorem]{Proposition}
\theoremstyle{definition}
\newtheorem{remark}[theorem]{Remark}
\newtheorem{definition}[theorem]{Definition}
\newtheorem{example}[theorem]{Example}
\newenvironment{Corollary}{\goodbreak \begin{corollary}\rmfamily}{\end{corollary}}
\numberwithin{equation}{section}
\numberwithin{table}{section}
\numberwithin{figure}{section}
\newcommand{\bend}{\hspace*{0ex} \hfill \hbox{\vrule height
    1.5ex\vbox{\hrule width 1.4ex \vskip 1.4ex\hrule  width 1.4ex}\vrule
    height 1.5ex}}
\long\def\symbolfootnote[#1]#2{\begingroup%
\def\thefootnote{\fnsymbol{footnote}}\footnote[#1]{#2}\endgroup}
\crefname{lemma}{Lemma}{Lemmata}
\crefname{definition}{Definition}{Definitions}
\crefname{theorem}{Theorem}{Theorems}
\crefname{corollary}{Corollary}{Corollaries}
\crefname{equation}{}{}
\crefname{remark}{Remark}{Remarks}
\crefname{algorithm}{Algorithm}{Algorithms}
\crefname{chapter}{Chapter}{Chapters}
\crefname{section}{Section}{Sections}
\crefname{table}{Table}{Tables}
\crefname{figure}{Figure}{Figures}
\crefname{example}{Example}{Examples}
\crefname{appendix}{Appendix}{Appendices}
\renewcommand{\thefootnote}{\fnsymbol{footnote}}
\title{Two subspace methods for frequency sparse graph signals}
\author{Tarek Emmrich \and Martina Juhnke-Kubitzke \and Stefan Kunis}
\date{}
\begin{document}

\maketitle

\begin{abstract}
We study signals that are sparse in graph spectral domain and develop explicit algorithms to reconstruct the support set as well as partial components from samples on few vertices of the graph. The number of required samples is independent of the total size of the graph and takes only local properties of the graph into account. Our results rely on an operator based framework for subspace methods and become effective when the spectral eigenfunctions are zero-free or linear independent on small sets of the vertices. The latter has recently been adressed using algebraic methods by the first author.

\medskip
 	\noindent\emph{Key words and phrases}:
   Signal processing on graphs, Sparse graph Fourier transform. 
   
 	\medskip
 	\noindent\emph{2020 AMS Mathematics Subject Classification} : \text{
      41A30, 
      05C50. 
 	}
\end{abstract}

\section{Introduction}

The Fourier transform has been generalised for signals defined on the vertex set of a graph \cite{ShRiVa16} and thus can serve also in the study of convolution operators in popular areas like geometric deep learning \cite{BrBrLeSzVa17,LeHuBuBrKu21}.
Special cases are the well known discrete Fourier transform (DFT), the discrete cosine transform, and the Walsh-Hadamard transform which are the Fourier transforms on the circle graph, the path graph, and the hypercube, respectively.
In all three cases fast algorithms are available for transforming any signal between spatial/vertex domain and frequency domain and these rely on divide and conquer strategies available for these specific graphs.
More recently, the a-priori assumption that only a small number of Fourier coefficients is non-zero has gained some attention and led to so-called sparse FFTs \cite{Iwen10,HaInKaPr12} which reduce the spatial sampling effort as well as the running time considerably using several quite specific properties of the DFT.
In greater generality, compressive sensing has also gained some popularity over the last two decades and its success for the DFT relies on a so-called robust uncertainty principle, see e.g., ~\cite[Thm.~12.32]{FoRa13} using tailored probabilistic arguments for \emph{random sampling}.
On the other hand, subspace methods are known to realise parameters (such as the active frequencies in a sparse sum of exponentials) as eigenvalues of certain matrices obtained from \emph{consecutive samples}, see e.g. \cite{CuLe18,StPl20}.
Their success relies on certain rank conditions which can be fulfilled by Chebotarev's uncertainty principle \cite{Tao05} for the DFT of prime length.

The primary goal of this short note is the discussion of two subspace methods for graphs which rely on 1) a recent generalisation of Chebotarev's uncertainty principle to graphs by the first author \cite{Em23} and 2) a meaningful interpretation of consecutive samples for graphs. 
Our first result is a variant of \cite{StPl20} adapted to the graph setting and uses samples in the neighbourhood of one vertex on which the eigenfunctions are assumed to not vanish. The special case of a circle graph has been considered e.g.~, in \cite{KoDr19}. 
The second method samples the graph signal in smaller neighbourhoods of several vertices, see also \cite{LiZhGaLi22} for a similar approach in so-called multi-snapshot spectral estimation.
We refer the interested reader to \cite{Tanaka_2020} for a recent survey on sampling graph signals.
However, note that in most cases the frequency-support of the signal is assumed to be known and we are only aware of \cite{KoDr19}, where the frequency-support is effectively computed for signals on the circle graph, and \cite[Proposition 2]{Marques16}, which states that samples in the neighbourhood of one vertex suffice for the identification of the support but only $\ell_0$-minimization is suggested as reconstruction algorithm.
The article is structured as follows: Section 2 shortly introduces the considered graph signal model together with two well understood examples.
All main results are contained in Section 3 and we close by a generalisation to simplicial complexes in Section 4.

\section{Preliminaries}
For a finite set of \emph{vertices} $V=[n]$ and $E\subseteq \binom{V}{2}$ we call $G=(V,E)$ a \emph{graph}.
We call two vertices $v,w$ \emph{connected}, if $\{v,w\} \in E$ and write $v \sim w$.
The \emph{adjacency matrix} $A \in \{0,1\}^{n \times n}$ of $G$ is defined by
\[
    A_{v,w} = \begin{cases} 1, \text{ if } \{v,w\}\in E, \\ 0 , \text{ otherwise}.
    \end{cases}
\]
The \emph{combinatorial Laplacian matrix} $L \in \R^{n \times n}$ is the matrix $L=D-A$, with $D_{v,v}=\operatorname{deg}(v)=|\{\{v,w\}\in E\}|$.
The Laplacian matrix is symmetric and positive semidefinite with eigenvalues $0 = \lambda_1 \leq \lambda_1 \leq \ldots \leq \lambda_n$ and orthornormal eigenvectors $n^{-1/2}(1,\hdots,1)^\top=u_1,u_2,\hdots,u_n\in\R^n$.
We write
\begin{align}
    L=U\Lambda U^*,
\end{align}
with $\Lambda=\operatorname{diag}(\lambda_1,\ldots,\lambda_n)$ and $U=(u_1,\hdots,u_n)$.
Throughout this paper, we assume the graph being \emph{simple} and \emph{connected}, which is equivalent to $A_{v,v}=0$, $v\in V$, and $\lambda_2>0$, respectively.
We define the \emph{distance} $d(v,w)$ of two vertices $v,w$ by the smallest integer $r$ such that there exist vertices $v=v_0, v_1, \ldots , v_r=w$ with $\{v_k,v_{k+1}\} \in E$.
For a vertex $v \in V$ the $k$\emph{-neighbourhood} of $v$ is the set $N(v,k)=\{w \in V : d(w,v) \leq k\}$.

We are interested in the analysis of signals defined on the vertices of the graph, i.e., vectors $f\in\R^n$ are identified with functions $f \colon V \to \R$. We are especially asking under which conditions an $s$-frequency-sparse signal
\begin{align}\label{defy}
    f=\sum_{j \in S} \beta_j u_j, \qquad \beta_j \neq 0,\; S\in\nchoosek{[n]}{s},\;s\ll n,
\end{align}
can be efficiently recovered from its samples $f(v)$, $v\in W\subset V$, $|W|\ll n$, without knowing the support $S$.
For notational convenience, let $f_W$ and $U_{W,S}$ denote the restriction of $f$ and $U$ to the rows in $W$ and columns in $S$, respectively.

\begin{example}[Circle graph]\label{ex:Circle}
 Let us start with the well-studied circle graph on $n$ vertices as
\end{example}
 \vspace{-0.2cm}
 \noindent illustrated in \cref{fig:Cn}.
 On this graph, the periodic forward shift has matrix representation
 \begin{multicols}{2}
     \[
    \tilde A_{k,\ell} = \begin{cases} 1, & \text{if } k=\ell-1 \mod n, \\ 0, & \text{otherwise},
    \end{cases}
    \] 
    \begin{wrapfigure}{l}{1\linewidth}
    \vspace{-0.7cm}
    \footnotesize
        \centering
        \begin{tikzpicture}
            \filldraw[black] (7,0) circle (2pt) node[anchor=north] {1};
            \filldraw[black] (8,0) circle (2pt) node[anchor=north] {2};
            \filldraw[black] (9,0) circle (2pt) node[anchor=north] {3};
            \filldraw[black] (10,0) circle (2pt) node[anchor=north] {4};
            \filldraw[black] (12.5,0) circle (2pt) node[anchor=north] {$n$};
            \draw (7,0) -- (8,0);
            \draw (8,0) -- (9,0);
            \draw (9,0) -- (10,0);
            \draw (10,0) -- (10.7,0);
            \draw (11.8,0) -- (12.5,0);
            \draw[bend right=20] (12.5,0) to (7.0,0);
            \node at (11.25,-0.1) {\ldots};
        \end{tikzpicture}
        \caption{The cirlce graph.}
        \label{fig:Cn}
    \end{wrapfigure}
 \end{multicols}
\noindent the adjacency matrix is $A=\tilde A+\tilde A^\top$ and Laplacian matrix is $L=2I-A$.
All three matrices are diagonalised by the Fourier matrix $U=\left(\eip{k j/n}\right)_{k,j\in[n]}$ and sparse signals with respect to this basis are well understood. We have:
\begin{enumerate}
 \item If $n$ is prime, then a classical result by Chebotarev, see e.g., \cite{Tao05}, asserts that no minor of $U$ vanishes, which implies a strong uncertainty principle and uniqueness of any $s$-sparse signal when at least $2s$ arbitrary samples are known, see also \cref{thm:Cheb} for the precise statement.
 \item If $n$ is arbitrary and $m\ge Cs\log^4n$ vertices are \emph{randomly} selected, i.e., rows of the matrix $U$ are chosen uniformly at random, then with high probability every subset $S\subset[n]$ of the columns yields a well-conditioned matrix and every $s$-sparse signal can be stably recovered by means of basis pursuit or specific greedy algorithms, see e.g., \cite[Thm.~12.32]{FoRa13}.
 \item If $n$ is arbitrary and $m\ge 2s$ \emph{consecutive} vertices are selected, then every $s$-sparse signal can be recovered by subspace methods like Prony's method, matrix pencil method, ESPRIT, or MUSIC see e.g., \cite{CuLe18,StPl20}. Stability is guaranteed if and only if the support $S$ is well-separated, i.e., if $\min_{j,\ell\in S,j\ne\ell}\min_{r\in\Z}|j-\ell+rn|>n/m$, see e.g., \cite{HoKu23}.
\end{enumerate}

\begin{example}[One-sparse signal]
We now consider an arbitrary graph and the one-sparse signal $f=\beta_j u_j$ for some $j\in[n]$.
In this case, we have
\begin{align*}
    \beta_j \lambda_j u_j(v)=(Lf)(v)=\sum_{w\sim v} \left(f(w)-f(v)\right)
\quad\text{and hence}\quad
    \lambda_j=\frac{1}{f(v)}\sum_{w\sim v} \left(f(w)-f(v)\right)
\end{align*}
if $u_j(v)\ne 0$. That is, the single active eigenvalue can be recovered from sampling the neighbouring vertices of a vertex on which the signal does not vanish. We generalise this idea to larger sparsity by means of so-called subspace methods.
Moreover, note that $u_j(v)=0$ for any nontrivial eigenvalue $\lambda_j\ne 0$ implies $$0=(\lambda_j u_j)(v)=(Lu_j)(v)=\sum_{w \sim v} u_j(w)$$
which can be read as a local symmetry property of the eigenfunction and graph.    
\end{example}

\section{Sampling and reconstruction methods}
In what follows, we discuss two subspace methods (a.k.a.~Prony's method) to first solve for the occuring eigenvalues and in a second step for the eigenfunctions in the reconstruction problem \eqref{defy}.
For the first method, we adapt the operator based approach \cite{StPl20} to the graph Laplacian resulting in sampling the $2s$-neighbourhood of one vertex for recovering an $s$-sparse signal.
The second method uses smaller neighbourhoods of several vertices and a similar idea can be found in \cite{LiZhGaLi22} under the name multi-snapshot spectral estimation.
We will analyse the number of required samples and the runtime of these approaches.
The computation of the occuring eigenvalues can be done without prior knowledge of the eigendecomposition of the Laplacian $L$. Up to a scalar factor we can also compute the eigenfunctions locally without knowledge of the whole eigendecomposition.

\subsection{Minimal sampling sets determining sparse signals}
Before considering the two specific methods, we will discuss a well-known result that $2s$ samples are necessary and, under an additional assumption, also sufficient for the reconstruction problem \eqref{defy}.
We say that a matrix $U\in\R^{n\times n}$ is \emph{Chebotarev}, if none of its minors vanishes.
Two explicit examples are the Fourier matrix in \cref{ex:Circle} if $n$ is prime, see \cite{Tao05}, and any Vandermonde-matrix with positive nodes, see \cite[Thm.~A.25]{FoRa13}.
As each minor is a polynomial in the entries of the matrix, any sufficiently random matrix is Chebotarev almost surely and we expect this property for the eigenvector matrix to hold except for specific obstructions by the graph itself.
A detailed analysis of this property by the first author can be found in \cite{Em23} for adjacency and Laplacian eigenvector matrices of random graphs.

The following result is well-known and we include a short proof for the reader's convenience.
\begin{theorem}[{e.g., \cite[Thm.~2.13]{FoRa13}}]\label{thm:Cheb}
    For any $s\in\N$, $s\le \frac{n}{2}$, $W\subset V$, $|W|\le 2s-1$, and $S_f\in \nchoosek{[n]}{s}$, there exist a set $S_g\in \nchoosek{[n]}{s}$, and coefficients $\hat f,\hat g\in\R^n$ with $\operatorname{supp}\hat f\subset S_f$, $\operatorname{supp}\hat g\subset S_g$,
    \[
    \sum_{k\in \tilde S_f} \hat f_k u_k=f \ne g = \sum_{k\in \tilde S_g} \hat g_k u_k,\quad \text{but} \quad f_W=g_W.
    \]

    On the other hand, if the eigenvector matrix $U \in \C^{n \times n}$ is Chebotarev, then for any $W\subset V$, $|W|\ge 2s$, the samples at $W$ uniquely determine any $s$-sparse signal, i.e.,
    \[
    f_W=g_W \quad \text{implies} \quad f = g.
    \]
    \end{theorem}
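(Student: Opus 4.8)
The plan is to reduce both halves of the statement to elementary linear algebra about the difference signal $h=f-g$. Its frequency coefficients $\hat h=\hat f-\hat g$ are supported on $S_f\cup S_g$, a set of at most $2s$ indices, and the condition $f_W=g_W$ is equivalent to $h_W=U_{W,[n]}\,\hat h=0$. So everything hinges on whether a restricted matrix $U_{W,T}$, with $T$ a frequency set of size up to $2s$, can annihilate a nonzero coefficient vector supported on $T$: non-uniqueness corresponds to a nontrivial kernel, and uniqueness to its absence.

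For the first (non-uniqueness) claim I would argue by counting columns against rows. Since $s\le \tfrac{n}{2}$ we may choose $T\in\nchoosek{[n]}{2s}$ with $S_f\subset T$. The matrix $U_{W,T}$ then has $|W|\le 2s-1$ rows but $2s$ columns, hence a nontrivial kernel; pick $\hat h\ne 0$ with $\operatorname{supp}\hat h\subset T$ and $U_{W,T}\,\hat h_T=0$. Setting $\hat f:=\hat h|_{S_f}$, $S_g:=T\setminus S_f$, and $\hat g:=-\hat h|_{S_g}$ splits $\hat h=\hat f-\hat g$ into two $s$-sparse pieces (note $|S_g|=2s-s=s$). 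The associated signals $f=U\hat f$ and $g=U\hat g$ satisfy $f_W=g_W$ because $h_W=U_{W,T}\,\hat h_T=0$, while $f\ne g$ since $U$ is invertible and $\hat f-\hat g=\hat h\ne 0$.

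For the second (uniqueness) claim, assume $U$ is Chebotarev, $|W|\ge 2s$, and $f_W=g_W$ with $f,g$ both $s$-sparse, and suppose for contradiction that $h\ne 0$. Let $T:=\operatorname{supp}\hat h$, so $1\le|T|\le 2s\le|W|$ and the vector $\hat h_T$ has no zero entry. Choosing any $W'\subset W$ with $|W'|=|T|$ produces a square submatrix $U_{W',T}$ whose determinant is a minor of $U$ and is therefore nonzero by hypothesis. Since $U_{W',T}\,\hat h_T=0$, invertibility of $U_{W',T}$ forces $\hat h_T=0$, contradicting $\hat h\ne 0$. Hence $h=0$, i.e.\ $f=g$.

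I do not expect a serious obstacle: both directions are clean consequences of the rank of a rectangular submatrix. The points that demand care are purely bookkeeping — ensuring $n\ge 2s$ so that a frequency set $T$ of size $2s$ exists in the first part, verifying that the split of $\hat h$ lands in supports of the correct size, and, in the second part, passing to the \emph{exact} support $T$ of $\hat h$ (rather than the possibly larger $S_f\cup S_g$) so that $U_{W',T}$ is genuinely square and the nonvanishing-minor property applies. The kernel and determinant arguments are insensitive to whether the entries are real or complex, so the same reasoning covers both the real Laplacian eigenvector matrix and the complex Fourier example.
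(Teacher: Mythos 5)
Your proof is correct and follows essentially the same route as the paper: both arguments reduce to the kernel of the restricted matrix $U_{W,T}$ with $T$ a disjoint union $S_f\cup S_g$ of size $2s$ (nontrivial kernel by column counting for non-uniqueness, trivial kernel via the Chebotarev property for uniqueness). Your extra care in passing to the exact support of $\hat h$ and extracting a square submatrix $U_{W',T}$ is simply an explicit justification of the full-column-rank claim that the paper asserts in one line.
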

\begin{proof}
 Without loss of generality, let $S_f\cap S_g=\emptyset$ and consider the restricted eigenvector matrix
 \[
  U_{W,S_f\cup S_g}\in \R^{|W|\times 2s}
 \]
 which has a nontrivial kernel in the first case and has full column rank by assumption in the second case.
\end{proof}

An algorithm using such a minimal sampling set $W$ would compute the eigenvector matrix $U_W\in\R^{|W|\times n}$ first and then use a combinatorial search for the smallest set $S\subset[n]$ which allows to interpolate all $|W|$ samples. For each candidate set $S$ out of the $\binom{n}{s}$ possibilities, this asks for solving the overdetermined linear system of equations $(u_k(v))_{v\in W,k\in S} \cdot (\hat{f}_k)_{k\in S} = (f(v))_{v\in W}$.

\subsection{Sampling one neighbourhood}
The first method builds on an operator-based formulation of Prony's method as considered in \cite{StPl20}. We rely on the two facts that the powers of the Laplace operator applied to the signal and evaluated at a fixed vertex
\begin{enumerate}
  \item can be computed recursively from neighbouring samples via
  \begin{align*}
    (L^kf)(v)=\sum_{w \sim v} (L^{k-1}f)(v)-(L^{k-1}f)(w),\qquad L^0f=f,
\end{align*}
 \item and that they constitute an exponential sum (with respect to the eigenvalues)
 \begin{align*}
    (L^kf)(v)=\sum_{j \in S} \alpha_j \lambda_j^k,\qquad \alpha_j\coloneqq \beta_j u_j(v).
\end{align*}
\end{enumerate}

\begin{theorem}\label{maintheoremintroduction}
   Let $G$ be a graph, $U \in \R^{n \times n}$ be the matrix of eigenvectors of $L$, and $v\in V$ be some fixed vertex. Then every $s$-sparse signal $f=\sum_{j \in S}\beta_j u_j$ with $u_j(v)\ne 0$, $j\in S$, can be recovered from the samples
   $f(w)$, $w\in W:=N(v,2s-1)$. In addition to the support set $S$ also the restricted eigenfunctions $\beta_ju_j(w)$, $w\in N(v,s)$, can be reconstructed.
\end{theorem}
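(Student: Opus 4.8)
The plan is to realise \cref{maintheoremintroduction} as an instance of Prony's method applied to the moment sequence $c_k := (L^k f)(v)$. By the second of the two facts recorded above, $c_k = \sum_{j \in S}\alpha_j \lambda_j^k$ with $\alpha_j = \beta_j u_j(v)$, so the $c_k$ form an exponential sum whose nodes are exactly the active eigenvalues. By the first fact together with a short induction on $k$ (using $d(v,\cdot)\le d(v,w)+d(w,\cdot)$ for $w\sim v$), the value $(L^k f)(v)$ depends only on the samples $f(w)$ with $w \in N(v,k)$; hence all moments $c_0,\dots,c_{2s-1}$ can be evaluated from the samples on $W = N(v,2s-1)$. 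The non-vanishing hypothesis $u_j(v)\neq 0$ together with $\beta_j\neq 0$ guarantees $\alpha_j\neq 0$ for every $j\in S$, i.e.\ no active term drops out of the sequence.

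First I would recover the eigenvalues. Writing $p(z)=\prod_{j\in S}(z-\lambda_j)=z^s+\sum_{i=0}^{s-1}p_i z^i$ for the Prony polynomial, the identity $\sum_{i=0}^{s}p_i c_{m+i}=\sum_{j\in S}\alpha_j\lambda_j^{m}p(\lambda_j)=0$ (with $p_s=1$) shows that the coefficient vector solves the Hankel system with matrix $H=(c_{m+i})_{m,i=0}^{s-1}$ and right-hand side $-(c_s,\dots,c_{2s-1})^\top$. The Vandermonde factorisation $H=V\operatorname{diag}(\alpha_j)_{j\in S}V^\top$, with $V=(\lambda_j^m)_{m=0,\dots,s-1;\,j\in S}$, makes $H$ invertible as soon as the $\lambda_j$, $j\in S$, are pairwise distinct and the $\alpha_j$ are non-zero; the latter we have already ensured. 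Solving this system and extracting the roots of $p$ yields the set $\{\lambda_j:j\in S\}$ and thereby the support $S$.

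Second, with the nodes now known, I would recover the restricted eigenfunctions pointwise. For a fixed target vertex $w$ the second fact gives $(L^k f)(w)=\sum_{j\in S}\beta_j u_j(w)\,\lambda_j^k$, a linear system in the unknowns $\gamma_j:=\beta_j u_j(w)$ whose coefficient matrix is the $s\times s$ Vandermonde $(\lambda_j^k)_{k=0,\dots,s-1;\,j\in S}$, again invertible by distinctness. Only the moments $(L^k f)(w)$ for $k\le s-1$ are needed, and these depend on $f$ restricted to $N(w,s-1)$. If $w\in N(v,s)$ then $d(v,w)\le s$, hence $N(w,s-1)\subseteq N(v,2s-1)=W$, so all required moments are again available from the given samples. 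Solving the Vandermonde system returns $\beta_j u_j(w)$ for each $j\in S$ and each $w\in N(v,s)$.

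The main obstacle, and the only point that is not purely mechanical Prony machinery, is the behaviour under eigenvalue multiplicities: if two indices $j\ne j'$ in $S$ satisfy $\lambda_j=\lambda_{j'}$, the columns of $V$ coincide, $H$ becomes singular, and the moment sequence only determines the aggregate coefficient $\sum_{\lambda_j=\mu}\alpha_j$ rather than the individual $\alpha_j$; moreover the splitting of $f$ inside a degenerate eigenspace is basis-dependent and not recoverable from samples at all. The argument therefore rests on the active eigenvalues $\{\lambda_j:j\in S\}$ being pairwise distinct, which I would record as the governing assumption (the generic situation for the graphs considered). Under it both the Hankel and the Vandermonde systems above are invertible and the neighbourhood inclusions make every needed moment computable, which completes the reconstruction.
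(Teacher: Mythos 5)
Your proposal is correct and follows the same two-step architecture as the paper's proof: compute the moments $g(k)=(L^kf)(v)$, $k=0,\dots,2s-1$, from the samples on $N(v,2s-1)$, run Prony's method to obtain the active eigenvalues, and then recover $\beta_j u_j(w)$ for $w\in N(v,s)$ using only depth-$(s-1)$ information at $w$, which by the triangle inequality stays inside $N(v,2s-1)$. The one place you diverge is the second step: the paper applies the Lagrange-type operator $\prod_{k\in S\setminus\{j\}}(L-\lambda_k I)/(\lambda_j-\lambda_k)$ to $f$ and evaluates at $w$, see \eqref{eq:Proj}, whereas you solve, for each $w$, the $s\times s$ Vandermonde system in the moments $(L^kf)(w)$, $k=0,\dots,s-1$. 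These are the same computation — inverting a Vandermonde matrix is Lagrange interpolation at the nodes $\lambda_j$ — so the difference is one of formulation rather than substance; the paper's form has the advantage of exhibiting the operator as the spectral projection onto the $\lambda_j$-eigenspace, which is what makes its subsequent remark on multiple eigenvalues transparent, while yours makes the linear algebra and the invertibility condition explicit. Your closing caveat on eigenvalue multiplicities is well placed and is not a weakness relative to the paper: the theorem as stated is silent on this, the paper defers the issue to its remark on multiple eigenvalues (only the eigenspace projection is recoverable there), and the analogous theorems for simplicial complexes in Section 4 explicitly add the hypothesis of one eigenvector per eigenvalue. In fact distinctness is needed not only for the decomposition but already for support recovery: if $\lambda_j=\lambda_{j'}$ for $j\ne j'\in S$, the aggregate coefficient $\alpha_j+\alpha_{j'}$ can vanish even though $u_j(v)\ne 0$ and $u_{j'}(v)\ne 0$, making that eigenvalue invisible in the moment sequence at $v$, so recording the assumption as you do is the right call.
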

\begin{proof}
    The samples $f(w)$, $w\in W$, allow to calculate $g(k):=(L^k f)(v)$ for $k=0,\hdots,2s-1$ and the remaining task is to solve the equations
    \begin{align*}
    \sum_{j \in S} \alpha_j \lambda_j^k = g(k)
    \end{align*}
    for the unknowns $\lambda_j$, e.g., via Prony's method \cite{StPl20}, or one of its variants.
    
    Beyond this first step, the recovery of the weighted eigenfunctions relies on
    \begin{align}\label{eq:Proj}
     \left(\prod_{k \in S \setminus \{j\}} \frac{L-\lambda_k I}{\lambda_j-\lambda_k}\right)f
     &=\sum_{i \in S}\beta_i \prod_{k \in S \setminus \{j\}} \frac{\left(L-\lambda_k I\right)u_i}{\lambda_j-\lambda_k} 
     =\sum_{i \in S}\beta_i \prod_{k \in S \setminus \{j\}} \frac{\lambda_i-\lambda_k}{\lambda_j-\lambda_k}u_i 
     =\beta_j u_j 
    \end{align}
    which can be evaluated for all vertices $w\in V$ with $d(w,v)\le s$ since the product of the shifted Laplace operators on the leftmost expression uses the $(s-1)$-neighbourhood of this vertex $w$ which stays within the $(2s-1)$-neighbourhood of the fixed vertex $v$.
\end{proof}

\begin{algorithm}
\caption{Recovery from samples in one neighbourhood}
\begin{algorithmic}\label{alg:one}
\State \textbf{Input:} Fixed vertex $v\in V$ with $f(v)\ne 0$, sparsity $s\in\N$, samples $f(w)$, $w\in N(v,2s-1)$
\State Compute $g(k)=(L^k f)(v)$, $k=0,\hdots,2s-1$
\State Solve the Hankel linear system
\[
  Hp=0,\quad \text{where}\; p_s=1,\; H\in\R^{s\times (s+1)},\; H_{k,\ell}=g(k+\ell)
\]
\State Compute the eigenvalues $\lambda_1,\hdots,\lambda_s$ of the companion matrix
\[
P=
 \begin{pmatrix}
  0       & \hdots & \hdots & 0      & -p_0  \\
  1       & \ddots &        & \vdots & -p_1  \\
  0       & \ddots & \ddots & \vdots & \vdots\\
  \vdots  & \ddots & \ddots & 0      & -p_{s-2}      \\
  0       & \hdots & 0      & 1 & -p_{s-1}         
 \end{pmatrix}
\]
\State Compute the local eigenvectors $\beta_j u_j(w)$, $j=1,\hdots,s$, $w\in N(v,s)$, via \eqref{eq:Proj}.
\State \textbf{Output:} Eigenvalues and local eigenvectors.
\end{algorithmic}
\end{algorithm}
\begin{remark}[Sampling effort and computational complexity]
Let $m$ denote the total number of edges in the subgraph induced by the vertices in the neighbourhood $N(v,2s-1)$.
Then the Laplace matrix restricted to these vertices has $O(m)$ nonzero entries and thus the first step needs at most $O(sm)$ operations. We note in passing that the factor $s$ can be removed if the neighbourhood of $v$ grows exponentially fast.
The  second and third step take $O(s^3)$ floating point operations.
Similar to the first step, the last step in \eqref{eq:Proj} takes at most $O(s^2 m)$ floating point operations.
\end{remark}

\begin{example}\label{ex:Pn}
We will illustrate \cref{alg:one} for the path graph as illustrated in \cref{fig:Pn}.
\begin{figure}[ht]
\footnotesize
        \centering
        \begin{tikzpicture}
            \filldraw[black] (0,0) circle (2pt) node[anchor=north] {1};
            \filldraw[black] (1,0) circle (2pt) node[anchor=north] {2};
            \filldraw[black] (2,0) circle (2pt) node[anchor=north] {3};
            \filldraw[black] (3,0) circle (2pt) node[anchor=north] {4};
            \filldraw[black] (5.5,0) circle (2pt) node[anchor=north] {$n+1$};
            \draw (0,0) -- (1,0);
            \draw (1,0) -- (2,0);
            \draw (2,0) -- (3,0);
            \draw (3,0) -- (3.7,0);
            \draw (4.8,0) -- (5.5,0);
            \node at (4.25,-0.1) {\ldots};
        \end{tikzpicture}
        \caption{The path graph.}
        \label{fig:Pn}
    \end{figure}
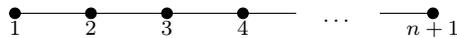

    The path graph on $n$ vertices has edge set $E=\{\{v,v+1\}:v=1,\hdots,n\}$. The Laplace matrix is given by
    \begin{align*}
        L=\begin{pmatrix}
             1     & -1     & 0      & \hdots & 0\\
            -1     & 2      & \ddots & \ddots & \vdots\\
            0      & \ddots & \ddots & \ddots & 0 \\
            \vdots & \ddots & \ddots & 2      & -1\\
            0      & \hdots & 0      & -1     & 1
        \end{pmatrix}.
    \end{align*}
    Its simple eigenvalues and eigenvectors are
    \begin{align*}
        \lambda_j=2-2\cos\Big(\frac{\pi (j-1)}{n}\Big)\in[0,4),\;
        u_{j}(v)=\frac{\sqrt{2-\delta_{1,j}}}{\sqrt{n}} \cdot \cos \left(\frac{\pi (j-1)(2v-1)}{2n}\right),\; v,j=1, \hdots, n.
    \end{align*}

    \begin{figure}[ht]
        \centering
        \includegraphics[width=4.5cm]{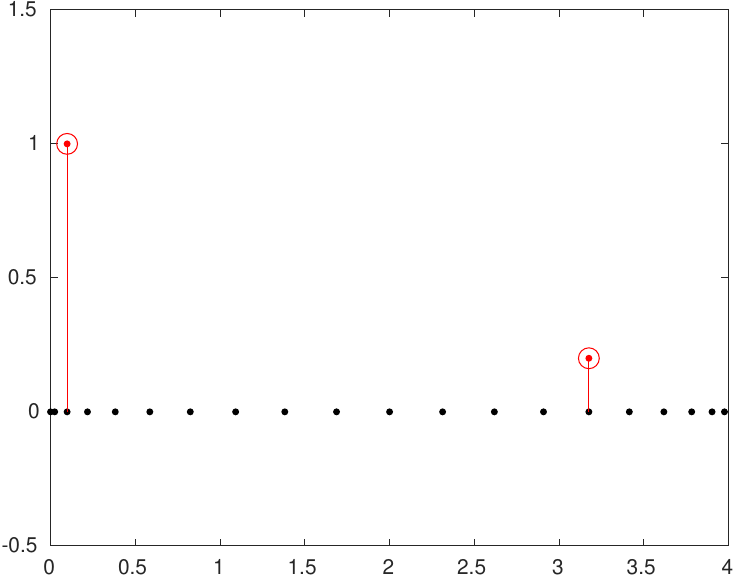}
        \includegraphics[width=4.5cm]{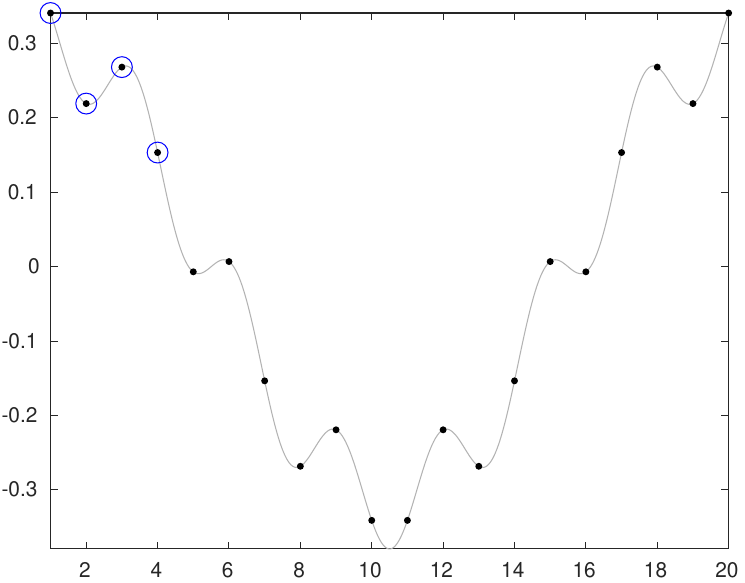}
        \includegraphics[width=4.5cm]{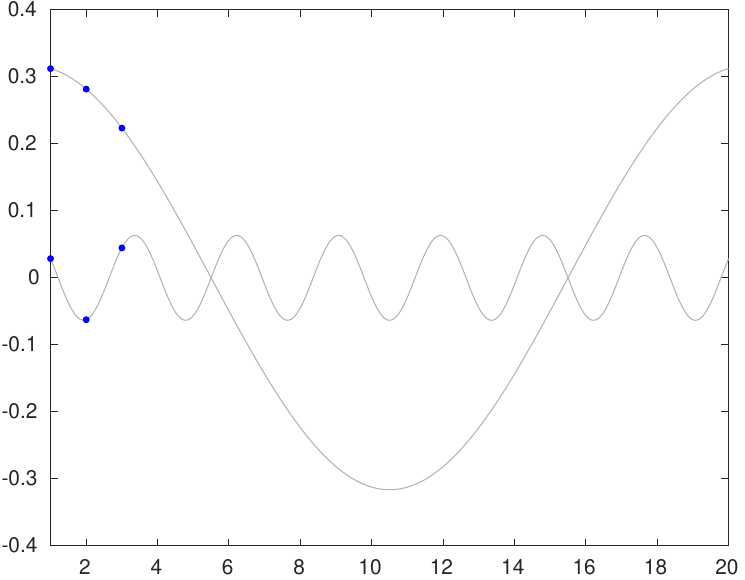}
        \caption{From left to right: Sparse coefficient vector on active eigenvalues, graph signal on the vertices $1,\hdots,20$ together with its used samples on the vertices $W=\{1,2,3,4\}$ (blue circles), and the decomposition via \eqref{eq:Proj} into $1u_3$ and $\frac15 u_{15}$ on the vertices $W=\{1,2,3\}$ (blue dots). All continuous curves are only meant to support visual inspection.}
        \label{fig:SigPn}
    \end{figure}
    To give an explicit example, let $n=20$, $s=2$, $S=\{3,15\}$, $f=1 u_3 + \frac15 u_{15}$, and $v=1$. \cref{fig:SigPn} shows the sparse coefficient vector (on the active eigenvalues $\lambda_3$ and $\lambda_{15}$) and the graph signal $f$ on the vertices $1,\hdots,20$, respectively.
    \cref{alg:one} uses the samples $(f(1),\hdots,f(4))\approx(0.34,0.22,0.27,0.15)$, see \cref{fig:SigPn} (right, blue circles), computes in its first step the values $g(0),\hdots,g(3)\approx(0.34,0.12,0.29,0.92)$, sets up the Hankel matrix and from its kernel vector the companion matrix, i.e.,
    \begin{align*}
      H\approx\begin{pmatrix}
          0.34&0.12&0.29\\
          0.12&0.29&0.92
      \end{pmatrix},\qquad
      P\approx \begin{pmatrix}
          0.00 & -0.31\\
          1.00 & 3.27
      \end{pmatrix}.
    \end{align*}
    Here, the absolute error between the numerically computed eigenvalues of the companion matrix and the true active eigenvalues $\lambda_3$, $\lambda_{15}$ as well as the maximal error between the numerically computed samples via \eqref{eq:Proj} and the true components $1u_3$ and $\frac15 u_{15}$ is smaller than $10^{-14}$.
\end{example}

\begin{remark}
 The condition $u_j(v)\ne 0$ is much weaker than being Chebotarev, since only the size-one minors are concerned. Moreover, note that this condition is true for all eigenfunctions and all vertices if the characteristic polynomial of the Laplace matrix is irreducible over the rationals up to the trivial factor for the eigenvalue zero, see \cite[Thm.~6]{Em23} for details and a discussion for random graphs.
\end{remark}

\begin{remark}[Multiple eigenvalues]
The leftmost bracketed term in \eqref{eq:Proj} is the projection operator to the eigenspace of the semisimple eigenvalue $\lambda_j$. If this subspace has dimension larger than one, we only recover this projection and no individual decomposition in a basis of this subspace.
An example is given by the circle graph in \cref{ex:Circle}, where the $j$-th and the $(n+2-j)$-th column of the Fourier matrix belong to the same real eigenvalue of the Laplacian matrix.
This can be seen as the discrete analogon to the two eigenfunctions $t\mapsto\exp(\pm \lambda t)$ for the Laplace operator on the real line with eigenvalue $\lambda$. Even larger multiplicities occur e.g., for the Laplace-Beltrami operator on the sphere and seem to be likely for specific discretisations of the sphere and small eigenvalues.

\end{remark}

\subsection{Sampling several neighbourhoods}
Since, depending on the graph, the size of the set $W=N(v,2s-1)$ can grow relatively fast with $s$, we want to improve our above approach in the way that we use less samples.
The decrease in the size of the neighbourhood will be compensated by considering several neighbourhoods, see also \cite{LiZhGaLi22} for a similar idea in so-called multi-snapshot spectral estimation.

\begin{theorem}\label{generalcase}
  Let $G$ be a graph, $U \in \R^{n \times n}$ be the matrix of eigenvectors of $L$, fix some vertices $v_1,\hdots,v_t\in V$ and radii $r_1,\hdots,r_t\in \N$, $r=r_1+\hdots+r_t$.
  Then every $s$-sparse signal $f=\sum_{j \in S}\beta_j u_j$ can be recovered from the samples
  $f(w)$, $w\in W:=\cup_{i=1}^t N(v_i,s-1+r_i)$ if the matrix
  \begin{align*}
    B=\begin{pmatrix}
        \alpha_{1,j_1} & \alpha_{1,j_2} & \ldots & \alpha_{1,j_s} \\
        \lambda_{j_1}\alpha_{1,j_1} & \lambda_{j_2}\alpha_{1,j_2} & \ldots & \lambda_{j_s}\alpha_{1,j_s} \\
        \vdots &\vdots & & \vdots \\
        \lambda_{j_1}^{r_1-1}\alpha_{1,j_1} &\lambda_{j_2}^{r_1-1}\alpha_{1,j_2} & \ldots & \lambda_{j_s}^{r_1-1}\alpha_{1,j_s} \\
        \alpha_{2,j_1} & \alpha_{2,j_2}& \ldots & \alpha_{2,j_s} \\
        \lambda_{j_1}\alpha_{2,j_1} &\lambda_{j_2}\alpha_{2,j_2} & \ldots &\lambda_{j_s}\alpha_{2,j_s}\\
        \vdots & \vdots & & \vdots \\
        \lambda_{j_1}^{r_t-1}\alpha_{t,j_1} & \lambda_{j_2}^{r_t-1}\alpha_{t,j_2} & \ldots & \lambda_{j_s}^{r_t-1}\alpha_{t,j_s}
    \end{pmatrix}\in\R^{r\times s},\qquad \alpha_{i,j}=\beta_j u_j(v_i),
\end{align*}
has full column rank $s$.
\end{theorem}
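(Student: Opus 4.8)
The plan is to reduce the reconstruction to a multi-snapshot Prony problem. First I would use the recursion $(L^kf)(v)=\sum_{w\sim v}(L^{k-1}f)(v)-(L^{k-1}f)(w)$ to compute, from the samples on $W$, the quantities $g_i(k):=(L^kf)(v_i)$ for every $i\in[t]$ and every $k=0,\dots,s-1+r_i$; this is exactly why $N(v_i,s-1+r_i)$ must be sampled, since evaluating $(L^kf)(v_i)$ reaches into the $k$-neighbourhood of $v_i$. By the exponential-sum representation each snapshot satisfies $g_i(k)=\sum_{j\in S}\alpha_{i,j}\lambda_j^{k}$ with $\alpha_{i,j}=\beta_ju_j(v_i)$, that is, all $t$ snapshots are exponential sums sharing the common nodes $\lambda_j$ but carrying snapshot-dependent coefficients.

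For the eigenvalue step I would set $p(x)=\prod_{j\in S}(x-\lambda_j)=\sum_{m=0}^{s}p_mx^m$, assume for the moment that the active eigenvalues $\lambda_j$, $j\in S$, are pairwise distinct so that $p$ is monic of degree $s$, and exploit that this single annihilating filter kills every snapshot: $\sum_{m=0}^{s}p_m\,g_i(k+m)=\sum_{j\in S}\alpha_{i,j}\lambda_j^{k}\,p(\lambda_j)=0$ for all $i$ and all $k=0,\dots,r_i-1$. Collecting these identities into the block-Hankel matrix $H\in\R^{r\times(s+1)}$ with $t$ blocks of $r_i$ rows and entries $H_{(i,k),m}=g_i(k+m)$, the exponential-sum representation factors the $r_i\times s$ block of $H$ corresponding to $v_i$ as $B_iW$, where $W\in\R^{s\times(s+1)}$, $W_{\ell,m}=\lambda_{j_\ell}^{m}$, and $B_i$ is the block of $B$ corresponding to $v_i$; stacking gives $H=BW$. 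Since $B$ has full column rank $s$ by hypothesis and $W$ has full row rank $s$ (a Vandermonde matrix with distinct nodes), the product has $\operatorname{rank}H=s$, so $\ker H$ is one-dimensional. As $(p_0,\dots,p_s)$ lies in this kernel, it is determined up to scaling; normalising to $p_s=1$ recovers $p$, its $s$ roots are exactly $\{\lambda_j:j\in S\}$, and matching them against $\Spec(L)$ yields the support $S$.

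For the coefficient step, the nodes $\lambda_j$ being known and distinct, each snapshot yields a Vandermonde system in the $\alpha_{i,j}$ that is uniquely solvable from $g_i(0),\dots,g_i(s-1+r_i)$, returning the local weighted eigenfunction values $\alpha_{i,j}=\beta_ju_j(v_i)$. Full column rank of $B$ forbids a vanishing column, so for each $j\in S$ there is some $i$ with $\alpha_{i,j}\neq0$, hence $u_j(v_i)\neq0$ and $\beta_j=\alpha_{i,j}/u_j(v_i)$; this pins down all coefficients and therefore $f$. Equivalently, writing $\lambda_j^{k}u_j(v_i)=(L^ku_j)(v_i)$ as a linear combination of eigenvector values on $N(v_i,k)\subset W$ exhibits a factorisation $B=M\,U_{W,S}\operatorname{diag}(\beta)$ with invertible $\operatorname{diag}(\beta)$, so that $\operatorname{rank}B=s$ forces $U_{W,S}$ to have full column rank and $\beta$ to be the unique solution of $U_{W,S}\beta=f_W$.

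I expect the main obstacle to be the rank bookkeeping around $H=BW$ together with the distinctness assumption on the active eigenvalues. If $\lambda_j=\lambda_{j'}$ for two indices $j,j'\in S$, then $W$ loses row rank, $H=BW$ drops below rank $s$, and the snapshots determine only the merged coefficient $\alpha_{i,j}+\alpha_{i,j'}$; in that regime one can recover at best the projection of $f$ onto the shared eigenspace, exactly as in the multiple-eigenvalue remark following \eqref{eq:Proj}. I would therefore either state the result under the hypothesis that the $\lambda_j$, $j\in S$, are pairwise distinct, or reinterpret the conclusion in terms of eigenspace projections; granting distinctness, the full-rank hypothesis on $B$ is precisely what makes both the kernel of $H$ one-dimensional and every column of $B$ nonzero, which is all the argument needs.
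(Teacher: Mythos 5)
Your proof takes essentially the same route as the paper's: compute $g_i(k)=(L^kf)(v_i)$ from the samples in $N(v_i,s-1+r_i)$, stack the per-vertex Hankel blocks into a matrix factored as $B$ times the Vandermonde matrix of the active eigenvalues, and use the full column rank of $B$ to conclude that the kernel of the stacked Hankel matrix equals the one-dimensional kernel of the Vandermonde factor, spanned by the annihilating polynomial. Your explicit coefficient-recovery step and the distinctness caveat on the active eigenvalues (which the paper leaves implicit when asserting that the Vandermonde kernel is one-dimensional) go beyond the paper's terse proof but do not change the approach.
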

Our result in \cref{maintheoremintroduction} is the case $t=1$, $r_1=s$, where $B$ has full rank by the factorization $B=(\lambda_j^k)_{k=0,\hdots,r_1-1;s\in S} \cdot \mathrm{diag}((\alpha_{1,j})_{j\in S})$ in the product of a Vandermonde matrix with respect to the eigenvalues and a diagonal matrix with nonzero diagonal entries if $u_j(v_1)\ne 0$ and $\beta_j\ne 0$.
\begin{proof}
    The main idea is to use the equations   
\begin{align*}
    g_i(k):=(L^kf)(i)=\sum_{j \in S} \alpha_{i,j}\lambda_j^k
\end{align*}
for not only one vertex $v$ but all vertices $v_1,\hdots,v_t$. We set up a stacked Hankel matrix of these samples which allows for the following Vandermonde factorization
\begin{align*}
  \tilde H=
    \begin{pmatrix}
        g_1(0) & g_1(1) & \ldots & g_1(s-1) \\
        g_1(1) & g_1(2) & \ldots & g_1(s) \\
        \vdots & \vdots & & \vdots \\
        g_1(r_1-1) & g_1(r_1) & \ldots & g_1(s-2+r_1) \\
        g_2(0) & g_2(1) & \ldots & g_2(s-1) \\
        g_2(1) & g_2(2) & \ldots & g_2(s) \\
        \vdots & \vdots & & \vdots \\
        g_t(r_t-1) & g_t(r_t) & \ldots & g_t(s-2+r_t)
        \end{pmatrix}
        =B \cdot C,\qquad C=\begin{pmatrix}
            \lambda_j^k
        \end{pmatrix}_{j\in S,k=0,\hdots,s}.
\end{align*}
First note that the kernel of the Vandermonde matrix $C$ is one dimensional and $Cp=0$ can be rephrased as the polynomial $p=\sum_{k=0}^s p_k \lambda^k$
having exactly the roots $\lambda_j$, $j\in S$.
If the matrix $B$ has full column rank $s$, then $\ker\tilde H=\ker C$.
\end{proof}

The second extremal case is $r_1=\hdots=r_t=1$ and thus $r=t$ and for simplicity also $t=s$, where the matrix $B$ has full column rank if and only if the eigenvectors restricted to $W$ are linearly independent. A sufficient condition for this to happen is that the full eigenvector matrix $U$ is Chebotarev.
\begin{Corollary}\label{maintheoremintroduction2}
Let $G$ be a graph, $U \in \R^{n \times n}$ be the matrix of eigenvectors of $L$, and fix some vertices $v_1,\hdots,v_s\in V$.
  Then every $s$-sparse signal $f=\sum_{j \in S}\beta_j u_j$ can be recovered from the samples
  $f(w)$, $w\in W:=\cup_{i=1}^s N(v_i,s)$ if the matrix
  \[U_{W,S}=(u_j(v_i))_{i=1,\hdots,s;j\in S}\in\R^{s\times s}\] is regular.
\end{Corollary}

\begin{remark}[Rank of $B$]
If $r_1=r_2=\hdots=r_t$, then the matrix $B$ in \cref{generalcase} can be written as column-wise Kronecker product
\begin{align*}
    B&=\left(B_{j_1}\otimes\Lambda_{j_1}\hdots B_{j_s}\otimes\Lambda_{j_s}\right),\qquad
    B_{j}=\begin{pmatrix}
        \alpha_{1,j}\\
        \vdots\\
        \alpha_{t,j}
    \end{pmatrix},\quad
    \Lambda_{j}=\begin{pmatrix}
        \lambda_j^0\\
        \vdots\\
        \lambda_j^{r_1-1}
    \end{pmatrix}.
\end{align*}
Together with \cite[Eq.(2.2)]{KoBa09}, $\tilde B=\left(B_{j_1},\hdots,B_{j_s}\right)$, and $\tilde \Lambda=\left(\Lambda_{j_1},\hdots,\Lambda_{j_s}\right)$, we have the identity
$B^\top B=\tilde B^\top \tilde B \circ \tilde \Lambda^\top \tilde \Lambda$, where $\circ$ denotes the Hadamard/entrywise product. The rank of $B^\top B$ (and thus also of $B$) is bounded from above by the product of the ranks of the factors $\tilde B$ and $\tilde \Lambda$ -- and we suspect that this is attained generically, see also \cite{DaDi23}. Other recent results \cite{HoYa20} show that $B$ has full rank as soon as certain rank and Kruskal rank conditions on the factors are met -- which however would become effective in our situation only if $\min\{r_1,t\}\ge s$ instead of $t\cdot r_1\ge s$.
\end{remark}

\begin{remark}[Sampling effort]
\cref{alg:one} can be adapted by replacing the Hankel matrix by the stacked Hankel matrix and skipping the last step. In case of \cref{maintheoremintroduction2}, we expect that much less samples of the graph signal are taken -- $s$ times an $s$-neighbourhood in contrast to one $(2s-1)$-neighbourhood.
\end{remark}

\begin{example}
For the path graph one might be tempted to use the method of Theorem \ref{generalcase} in an efficient way that needs less samples. For example any 6-sparse signal $f$ and the sequence $r_1=3$, $r_2=2$, $r_3=1$ leads to the sampling set $N(1,8)\cup N(2,7) \cup N(3,6)= \{1,\ldots,9\}$, which however is too small for recovery by \cref{thm:Cheb}. 
\end{example}
\vspace{-0.25cm}
Moreover, there are cases where the neighbourhood contains enough vertices but the matrix $B$ is still singular. Let $n\ge 6$ and $G$ be the graph as shown in \cref{UmbrellaGraph}.
Setting $s=r=3$, $r_n=2$, and $r_{n-1}=1$ samples each graph signal at all vertices but the matrix
\begin{multicols}{2}
\[
B=
\begin{pmatrix}
    \alpha_{n,j_1} & \alpha_{n,j_2} & \alpha_{n,j_3} \\
    \lambda_{j_1} \alpha_{n,j_1} & \lambda_{j_2} \alpha_{n,j_2} & \lambda_{j_3} \alpha_{n,j_3} \\
    \alpha_{{n-1},j_1} & \alpha_{{n-1},j_2} & \alpha_{{n-1},j_3}
\end{pmatrix}
\]
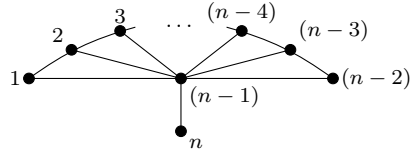
\begin{wrapfigure}{l}{1\linewidth}
\footnotesize
    \vspace{-0.7cm}
        \centering
        \begin{tikzpicture}
            \filldraw[black] (3,0) circle (2pt) node[anchor=north west] {$(n-1)$};
            \filldraw[black] (3,-0.7) circle (2pt) node[anchor=north west] {$n$};
            \filldraw[black] (1,0) circle (2pt) node[anchor=east] {1};
            \filldraw[black] (5,0) circle (2pt) node[anchor=west] {$(n-2)$};
            \filldraw[black] (1.56,0.38) circle (2pt) node[anchor=south east] {2};
            \filldraw[black] (2.2,0.63) circle (2pt) node[anchor=south] {3};
            \filldraw[black] (4.44,0.38) circle (2pt) node[anchor=south west] {$(n-3)$};
            \filldraw[black] (3.8,0.63) circle (2pt) node[anchor=south] {$(n-4)$};
            \draw[bend right=10] (2.4,0.68) to (1,0);
            \draw[bend right=10] (5,0) to (3.6,0.68);
            \draw (1,0) -- (3,0);
            \draw (3,0) -- (5,0);
            \draw (3,0) -- (3,-0.7);
            \node at (3,0.7) {\ldots};
            \draw (1.56,0.38) -- (3,0);
            \draw (2.2,0.63) -- (3,0);
            \draw (4.44,0.38) -- (3,0);
            \draw (3.8,0.63) -- (3,0);
        \end{tikzpicture}
        \caption{The umbrella graph.}
        \label{UmbrellaGraph}
    \end{wrapfigure}
\end{multicols}
\noindent is singular since $\lambda_{j}\alpha_{n,j}=\alpha_{n,j}-\alpha_{{n-1},j}$.

\section{Generalization to simplicial complexes}

In this section we will generalize the results for sparse signals on graphs to simplicial complexes. We will see that the special structure of the Laplacian and its eigenspaces allows to even improve the results for higher-dimensional simplicial complexes.

Similar to graphs there are Laplacian matrices or Laplacian operators for simplicial complexes \cite{MuHoJo22}. Let $\Delta$ be a simplicial complex on $[n]$, i.e., $\Delta$ is a collection of subsets of $[n]$ that is closed under inclusion. We denote by $\partial$ the boundary operator of the chain complex $C_\bullet(\Delta)$ with $C_k(\Delta)$ being the free $\mathbb{R}$-vectorspace of $k$-chains whose basis is given by the $k$-faces of $\Delta$, i.e., by elements in $\Delta_k=\binom{[n]}{k+1}\cap \Delta$. We use $\partial_k$ to denote the $k$-th operator of $\partial$, i.e.,
\[
\partial_k(\langle v_0, \ldots, v_k\rangle) = \sum_{i=0}^k {(-1)}^i \cdot \langle v_0,\ldots,\bar{v_i},\ldots,v_k\rangle,
\]
where $\bar{v_i}$ means that the element $v_i$ is omitted. The $k$\emph{-th Laplacian operator} $L_k(\Delta)$, or just $L_k$, if the complex is clear from the context, is defined by
\[
L_k=\underbrace{\partial_{k}^* \circ \partial_k}_{L_k^\textnormal{DN}} + \underbrace{\partial_{k+1} \circ \partial_{k+1}^*}_{L_k^\textnormal{UP}},
\]
where we also use $L_k$, $L_k^\textnormal{UP}$ and $L_k^\textnormal{DN}$ for the matrix representation of the $k$-th Laplacian (up or down)-operator. Note that $L_0=L_0^\textnormal{UP}$ is the usual graph Laplacian of the 1-skeleton of $\Delta$ and that $\partial_1$ is the vertex-edge incidence matrix. It is well-known that
\[\ker(L_k) \cong H_k(\Delta,\R),\]
where $H_k(\Delta)=\operatorname{ker}(\partial_k)/\operatorname{im}(\partial_{k+1})$ denotes the $k$-th homology group of $\Delta$ with coefficients in $\mathbb{R}$. We are interested in the eigenvalues and eigenvectors of $L_k$. The former are again real because of the symmetry and they are nonnegative. Furthermore, we know that
\[\Spec_{\neq 0}(L_k)=\Spec_{\neq 0}(L_k^\textnormal{UP}) \cup \Spec_{\neq 0}(L_k^\textnormal{DN})\]
since the operators are self-adjoint and mutually annihilating, i.e., $\partial_{k} \circ \partial_{k+1}=0$. Thus we can decompose 
\[C_k(\Delta)=\operatorname{Eig}_{\neq 0}(L_k^\textnormal{UP}) \oplus \operatorname{Eig}_{\neq 0}(L_k^\textnormal{DN}) \oplus H_k(\Delta,\R).\]
We write again $L_K=U\Lambda U^*$ with columns $u_i$ of $U$. Given a simplicial complex $\Delta$, we say that $k$-faces $\sigma,\tau\in \Delta$ have distance $d$, denoted by $d(\sigma,\tau)=d$, if $d\in \N$ is minimal such that there exists a sequence $\sigma=\tau_0,\ldots,\tau_{d}=\tau$ of $k$-faces of $\Delta$ with $|(\tau_i \cap \tau_{i+1})|=k$ for all $0\leq i\leq d-1$. We define the $d$-neighbourhood of $\sigma$ as
\[N(\sigma,d)\coloneqq\{\tau \in \binom{[n]}{k+1}~:~ d(\sigma,\tau)\leq d\}.\]
Basically, the same theorems as in the graph case do hold. We want to analyze how well we can recover an $s$-sparse sum of eigenvectors
\[f=\sum_{i \in S} \beta_i u_i\]
of eigenvectors by sampling $f$ at relatively few $k$-faces. As in the graph case, we will need to sample at a neighbourhood of a $k$-face, but we can improve on the size of the neighbourhood. We start with the reconstruction of $L_k^\textnormal{UP}$- or $L_k^\textnormal{DN}$-signals analog to Theorem \ref{maintheoremintroduction}.
\begin{theorem}
    Let $\Delta$ be a simplicial complex, $L_k$ its $k$-th Laplacian matrix, $L_k=U\Lambda U^*$ and $T \in \{L_k^\textnormal{UP},L_k^\textnormal{DN}\}$. Every $s$-sparse signal
    \[f=\sum_{j \in S}\beta_j u_j, \text{ for } u_j \in \operatorname{Eig}_{\neq 0} (T) \text{ for all }j \in S, \]
    with only one eigenvector per eigenvalue can be recovered at the face $\sigma \in \Delta_k$ by sampling the values $f(\tau)$ for all $\tau\in N(\sigma, 2s-1)$, if $u_j(\sigma)\neq 0$ for all $j \in S$.
\end{theorem}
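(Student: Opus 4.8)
The plan is to transcribe the proof of \cref{maintheoremintroduction} to the simplicial setting, the one genuinely new ingredient being a locality estimate for the operator $T$ measured in the $k$-face distance $d(\cdot,\cdot)$. First I would record the two structural facts that drive Prony's method. Set $g(m)\coloneqq(T^mf)(\sigma)$. Since every $u_j$ lies in $\operatorname{Eig}_{\neq 0}(T)$ with some eigenvalue $\lambda_j\neq 0$, we have $T^mu_j=\lambda_j^m u_j$, hence
\[
g(m)=\sum_{j\in S}\beta_j\lambda_j^m\,u_j(\sigma)=\sum_{j\in S}\alpha_j\lambda_j^m,\qquad \alpha_j\coloneqq\beta_j u_j(\sigma),
\]
so that $g$ is an $s$-term exponential sum in the eigenvalues of $T$. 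The hypothesis $u_j(\sigma)\neq 0$ together with $\beta_j\neq 0$ gives $\alpha_j\neq 0$, and the assumption of only one eigenvector per eigenvalue guarantees that the nodes $\lambda_j$, $j\in S$, are pairwise distinct.

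The key step is to show that $g(0),\dots,g(2s-1)$ can actually be computed from the prescribed samples, i.e.\ that $T$ has radius one: $(Tf)(\sigma)$ depends only on the values $f(\tau)$ with $d(\sigma,\tau)\le 1$. Writing $T$ as a matrix, its off-diagonal entry $T_{\sigma,\tau}$ is nonzero only for $\tau$ down-adjacent to $\sigma$ when $T=L_k^{\textnormal{DN}}$, or up-adjacent to $\sigma$ when $T=L_k^{\textnormal{UP}}$. In the first case $\sigma$ and $\tau$ share a $(k-1)$-face by definition, so $|\sigma\cap\tau|=k$ and $d(\sigma,\tau)=1$; in the second case $\sigma,\tau$ are two distinct facets of a common $(k+1)$-face $\rho$, whence $|\sigma\cap\tau|=|\sigma|+|\tau|-|\sigma\cup\tau|\ge (k+1)+(k+1)-(k+2)=k$ and again $d(\sigma,\tau)=1$. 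From the recursion $(T^mf)(\sigma)=\sum_{\tau}T_{\sigma,\tau}(T^{m-1}f)(\tau)$, the analog of the neighbour recursion in \cref{maintheoremintroduction}, induction on $m$ (using the triangle inequality $N(\tau,m-1)\subseteq N(\sigma,m)$ for $\tau\in N(\sigma,1)$) shows that $(T^mf)(\sigma)$ is determined by the samples on $N(\sigma,m)$. Thus the $2s$ values $g(0),\dots,g(2s-1)$ are available from $f|_{N(\sigma,2s-1)}$.

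With both facts in hand the recovery copies \cref{alg:one}: the distinct nodes $\lambda_j$ and nonzero weights $\alpha_j$ force the Hankel matrix $H_{a,b}=g(a+b)$ of size $s\times(s+1)$ to have a one-dimensional kernel, whose coefficient vector is the Prony polynomial $\prod_{j\in S}(\lambda-\lambda_j)$; the eigenvalues of the associated companion matrix return the $\lambda_j$ and thereby the support $S$, matched against the known spectrum of $T$. Finally, the local eigenfunctions are recovered as in \eqref{eq:Proj}: the projector $\prod_{\ell\in S\setminus\{j\}}(T-\lambda_\ell I)/(\lambda_j-\lambda_\ell)$ applied to $f$ equals $\beta_j u_j$, and by the same radius-one locality it can be evaluated at every face $\tau$ with $d(\tau,\sigma)\le s$, since its $s-1$ applications of $T$ reach only out to $N(\tau,s-1)\subseteq N(\sigma,2s-1)$.

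The main obstacle is precisely the locality estimate of the second paragraph: one must check that the $k$-face distance defined via shared $(k-1)$-faces also controls up-adjacency, i.e.\ that two facets of a common $(k+1)$-face are at distance one. This is the only point where the combinatorics of $\Delta$ enters rather than a verbatim copy of the graph argument, and it is what makes $N(\sigma,2s-1)$ the correct sampling region for both choices $T\in\{L_k^{\textnormal{UP}},L_k^{\textnormal{DN}}\}$. Everything else adapts \cref{maintheoremintroduction} directly; the distinct-eigenvalue hypothesis plays the same role as in the remark on multiple eigenvalues, ensuring that Prony's method resolves all $s$ components instead of only their projection onto a shared eigenspace.
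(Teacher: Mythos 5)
Your proof is correct and follows exactly the route the paper intends: the paper states this theorem without a separate proof, presenting it as the direct analog of \cref{maintheoremintroduction}, and your argument is precisely that proof transcribed to the simplicial setting (exponential sum $g(m)=(T^mf)(\sigma)$, Prony/Hankel step with distinct nonzero eigenvalues guaranteed by the one-eigenvector-per-eigenvalue hypothesis, and local recovery via the projectors \eqref{eq:Proj}). The one ingredient the paper leaves implicit --- that both $L_k^{\textnormal{UP}}$ and $L_k^{\textnormal{DN}}$ have nonzero off-diagonal entries only between $k$-faces at distance one in the metric $d(\cdot,\cdot)$, so that $(T^mf)(\sigma)$ and the evaluation of the projectors are computable from the samples on $N(\sigma,2s-1)$ --- is exactly the locality lemma you supply.
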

We decompose the set $S$ into $S=S^\textnormal{UP} \cup S^\textnormal{DN} \cup S^0$, according to $i \in S^\textnormal{UP}$ iff $u_i \in \operatorname{Eig}(L_k^\textnormal{UP})$, $i \in S^\textnormal{DN}$ iff $u_i \in \operatorname{Eig}(L_k^\textnormal{DN})$ and $i \in S^0$ iff $u_i \in H_k(\Delta,\R)$.
\begin{theorem}
    Let $\Delta$ be a simplicial complex, $L_k$ its $k$-th Laplacian matrix, $L_k=U \Lambda U^*$. Let
    \[
    f=\sum_{i \in S} \beta_i u_i
    \]
    be a signal with $u_i \notin \operatorname{ker}(L_k)$. We can recover the representation of $f$ by recovering the representations of $L_k^\textnormal{UP}(f)$ and $L^\textnormal{DN}_k(f)$. This requires sampling in the $2\cdot\operatorname{max}\{|S^\textnormal{DN}|,|S^\textnormal{UP}|\}$-neighbourhood.
\end{theorem}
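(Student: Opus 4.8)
The plan is to use the Hodge-type decomposition $C_k(\Delta)=\operatorname{Eig}_{\neq 0}(L_k^{\textnormal{UP}}) \oplus \operatorname{Eig}_{\neq 0}(L_k^{\textnormal{DN}}) \oplus H_k(\Delta,\R)$ recorded above to break the reconstruction into two independent instances of the preceding theorem (the simplicial analogue of \cref{maintheoremintroduction}), one for the up-part and one for the down-part of $f$. Since $u_i\notin\ker(L_k)=H_k(\Delta,\R)$ for every $i\in S$, we have $S^0=\emptyset$, so $f=f^{\textnormal{UP}}+f^{\textnormal{DN}}$ with $f^{\textnormal{UP}}=\sum_{i\in S^{\textnormal{UP}}}\beta_i u_i$ and $f^{\textnormal{DN}}=\sum_{i\in S^{\textnormal{DN}}}\beta_i u_i$.

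First I would record the separating identity that makes the two parts visible to the two operators. As $\partial_k\circ\partial_{k+1}=0$, the operators $L_k^{\textnormal{UP}}$ and $L_k^{\textnormal{DN}}$ mutually annihilate each other's nonzero eigenspaces, so $L_k^{\textnormal{UP}}$ vanishes on $\operatorname{Eig}_{\neq 0}(L_k^{\textnormal{DN}})$ and conversely. Consequently
\[
  L_k^{\textnormal{UP}}(f)=\sum_{i\in S^{\textnormal{UP}}}\beta_i\lambda_i u_i,\qquad L_k^{\textnormal{DN}}(f)=\sum_{i\in S^{\textnormal{DN}}}\beta_i\lambda_i u_i,
\]
and each right-hand side is a sparse sum of eigenvectors lying in a single nonzero eigenspace, of sparsity $|S^{\textnormal{UP}}|$ and $|S^{\textnormal{DN}}|$ respectively, whose rescaled coefficients $\beta_i\lambda_i$ are still nonzero because $\lambda_i\neq 0$. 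Hence $L_k^{\textnormal{UP}}(f)$ meets the hypotheses of the preceding theorem with $T=L_k^{\textnormal{UP}}$ and $L_k^{\textnormal{DN}}(f)$ those with $T=L_k^{\textnormal{DN}}$, where I use the (implicit, inherited) non-vanishing condition $u_i(\sigma)\neq 0$. Applying that theorem to each part recovers the active eigenvalues $\lambda_i$ together with the weighted local eigenvectors $\beta_i\lambda_i u_i$; dividing out the now-known $\lambda_i$ returns $\beta_i u_i$ for $i\in S^{\textnormal{UP}}$ and $i\in S^{\textnormal{DN}}$, and adding the two parts reconstructs $f$.

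It remains to account for the sampling neighbourhood. Recovering the $|S^{\textnormal{UP}}|$-sparse signal $L_k^{\textnormal{UP}}(f)$ by the operator-Prony step of the preceding theorem requires the quantities $\big((L_k^{\textnormal{UP}})^{m}f\big)(\sigma)$ for $m=1,\dots,2|S^{\textnormal{UP}}|$ (and the projection evaluations recovering the local eigenvectors stay within the same range). A single application of $L_k^{\textnormal{UP}}$ or $L_k^{\textnormal{DN}}$ to a $k$-cochain couples $\sigma$ only to faces $\tau$ with $|\sigma\cap\tau|=k$, i.e.\ $d(\sigma,\tau)\le 1$; hence the $m$-fold application is determined by the values of $f$ on $N(\sigma,m)$. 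Therefore $N(\sigma,2|S^{\textnormal{UP}}|)$ suffices for the up-part and $N(\sigma,2|S^{\textnormal{DN}}|)$ for the down-part, and their union is $N(\sigma,2\max\{|S^{\textnormal{UP}}|,|S^{\textnormal{DN}}|\})$, which is the claimed sampling set.

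I expect the crux to be the neighbourhood compatibility rather than the algebra: one must check that the up-coupling used by $L_k^{\textnormal{UP}}$ and the down-coupling used by $L_k^{\textnormal{DN}}$ are both subsumed by the single distance $d(\cdot,\cdot)$ from the definition -- up-adjacent $k$-faces automatically satisfy $|\sigma\cap\tau|=k$ because they sit inside a common $(k+1)$-face, so both operators act within the $1$-neighbourhood. A secondary point worth stating explicitly is the role of the hypothesis $u_i\notin\ker(L_k)$: a homology component of $f$ would be annihilated by both $L_k^{\textnormal{UP}}$ and $L_k^{\textnormal{DN}}$ and could never be recovered by this scheme, which is exactly why $S^0$ is excluded. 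I also note as a bonus that the splitting lets the method tolerate an up-eigenvalue that numerically coincides with a down-eigenvalue, since the two are resolved in separate Prony problems.
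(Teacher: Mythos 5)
Your route is the same as the paper's: use $\partial_k\circ\partial_{k+1}=0$ to split $f$ into $f^{\textnormal{UP}}=L_k^{\textnormal{UP}}f=\sum_{i\in S^{\textnormal{UP}}}\beta_i\lambda_i u_i$ and $f^{\textnormal{DN}}=L_k^{\textnormal{DN}}f=\sum_{i\in S^{\textnormal{DN}}}\beta_i\lambda_i u_i$, recover each part by the preceding simplicial theorem, and pay one extra unit of radius because evaluating $f^{\textnormal{UP}}$ or $f^{\textnormal{DN}}$ at a face costs one application of the operator; this is exactly the paper's accounting (``the $2r$- and not the $(2r-1)$-neighbourhood''), and your check that both up- and down-adjacency of $k$-faces are contained in the distance-one relation $|\sigma\cap\tau|=k$ is a point the paper leaves implicit.

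There is, however, one genuine gap: you set up the two Prony problems ``for the $|S^{\textnormal{UP}}|$-sparse signal'' and ``for the $|S^{\textnormal{DN}}|$-sparse signal'' as if these two sparsities were known, but a priori only the total sparsity $s$ is available; how $S$ splits into $S^{\textnormal{UP}}$ and $S^{\textnormal{DN}}$ is part of what must be discovered, and the preceding theorem (like \cref{alg:one}) takes the sparsity as an input. The paper devotes the middle of its proof to exactly this point: since $u_i\notin\ker(L_k)$ forces $S^0=\emptyset$, one has $|S^{\textnormal{UP}}|+|S^{\textnormal{DN}}|\le s$, so at least one of the two parts is at most $\lceil s/2\rceil$-sparse; one therefore runs the recovery on both parts with assumed sparsity $\lceil s/2\rceil$ (the moments $\bigl((L_k^{\textnormal{UP}})^m f\bigr)(\sigma)$, $\bigl((L_k^{\textnormal{DN}})^m f\bigr)(\sigma)$ available in the sampled $2\max\{|S^{\textnormal{UP}}|,|S^{\textnormal{DN}}|\}$-neighbourhood suffice for this, as $2\max\{|S^{\textnormal{UP}}|,|S^{\textnormal{DN}}|\}\ge 2\lceil s/2\rceil$); the part whose true sparsity is indeed at most $\lceil s/2\rceil$ is then correctly recovered and reveals its exact number of terms, which yields an upper bound on the number of terms of the other part, and that part is recovered in a second step, still within the sampled neighbourhood. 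Without this two-step argument (or an equivalent rank-detection argument for determining the split), the procedure you describe cannot be executed from the data the theorem actually provides, so you should add it.
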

\begin{proof}
By applying $L_k^\textnormal{UP}$ and $L_k^\textnormal{DN}$ to $f$ we obtain two sums with a reduced number of summands, namely
\[f^\textnormal{UP}=\sum_{i \in S^\textnormal{UP}} \beta_i \lambda_i u_i \text{ and } f^\textnormal{DN}=\sum_{i \in S^\textnormal{DN}} \beta_i \lambda_i u_i.\]
This gives us two sums that can be recovered individually. A priori we do not know the sizes of the sets $S^\textnormal{UP}$ and $S^\textnormal{DN}$. We know for sure that $|S^\textnormal{UP}| + |S^\textnormal{DN}| \leq s$ and thus we know that one of the set sizes is bounded by $\frac{s}{2}$. In a first step, we therefore assume that they are $\frac{s}{2}$-sparse. One of these sums will be recovered and will also reveal the real number of summands in the sum.  We hence know an upper bound for the number of summands in the other sum and can recover this sum in a second step. By applying $L^\textnormal{UP}_k$ and $L^\textnormal{DN}_k$ we lose the contribution of the eigenvectors corresponding to the eigenvalue 0. The sampling of $f^\textnormal{UP}$ and $f^\textnormal{DN}$ in an $r$-neighbourhood requires the knowledge of $f$ in an $(r+1)$-neighbourdhood. Hence we need the $2r$- and not the $(2r-1)$-neighbourdhood from the prior theorem.
\end{proof}

We continue with the most general statement. The matrix $B$ will is the same matrix as in Theorem \ref{generalcase}.
\begin{theorem}
    Let $\Delta$ be a simplicial complex, $L_k$ its $k$-th Laplacian matrix, $L_k=U\Lambda U^*$ and $T \in \{L_k^\textnormal{UP},L_k^\textnormal{DN}\}$. Every $s$-sparse signal
    \[f=\sum_{j \in S}\beta_j u_j, \text{ for } u_j \in \operatorname{Eig}_{\neq 0} (T) \text{ for all }j \in S, \]
    with only one eigenvector per eigenvalue can be recovered by sampling the values $f(\tau)$ for $\tau \in W$ if there exists a set $R\subseteq \Delta_k$ and integers $r_\sigma \geq 1$ for $\sigma \in R$, with $\sum_{\sigma \in R}r_\sigma=s$ and $N(\sigma, s-1+r_\sigma)\subseteq W$ and if the corresponding matrix $B$ is regular.
\end{theorem}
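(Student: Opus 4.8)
The plan is to mirror the proof of \cref{generalcase}, replacing the graph Laplacian $L$ by the chosen operator $T\in\{L_k^\textnormal{UP},L_k^\textnormal{DN}\}$ and the vertex neighbourhoods by the face neighbourhoods $N(\sigma,\cdot)$. First I would record the two structural facts that drive the argument. Because every $u_j$ is an eigenvector of $T$ with $Tu_j=\lambda_ju_j$ (here the hypothesis $u_j\in\operatorname{Eig}_{\neq0}(T)$ is essential: the eigenvectors of the complementary type, as well as the harmonic ones, lie in $\ker T$, so restricting to one operator keeps the sum clean), the samples of the iterated operator form an exponential sum
\[
 g_\sigma(\ell):=(T^\ell f)(\sigma)=\sum_{j\in S}\alpha_{\sigma,j}\lambda_j^\ell,\qquad \alpha_{\sigma,j}=\beta_ju_j(\sigma),
\]
for every $k$-face $\sigma\in\Delta_k$ and every $\ell\in\NZ$.

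Second, I would verify the locality statement: one application of $T$ couples a $k$-face $\sigma$ only to $k$-faces $\tau$ with $|\sigma\cap\tau|=k$. For $T=L_k^\textnormal{DN}$ these are the faces sharing a common $(k-1)$-face, whereas for $T=L_k^\textnormal{UP}$ they are the faces lying in a common $(k+1)$-face; in both cases $|\sigma\cap\tau|=k$, so $\tau\in N(\sigma,1)$ in the distance of the excerpt. Iterating, $g_\sigma(\ell)$ is determined by the values $f(\tau)$, $\tau\in N(\sigma,\ell)$. Hence the hypothesis $N(\sigma,s-1+r_\sigma)\subseteq W$ guarantees that from the samples $f(\tau)$, $\tau\in W$, we can compute $g_\sigma(\ell)$ for all $\sigma\in R$ and all $0\le\ell\le s-1+r_\sigma$.

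With these values in hand, I would assemble the stacked Hankel matrix whose $\sigma$-block consists of the $r_\sigma$ rows $\bigl(g_\sigma(\ell),g_\sigma(\ell+1),\dots,g_\sigma(\ell+s)\bigr)$, $\ell=0,\dots,r_\sigma-1$. The exponential-sum representation yields the factorization $\tilde H=B\cdot C$ with $B$ the matrix from \cref{generalcase} (now with entries built from $\alpha_{\sigma,j}=\beta_ju_j(\sigma)$) and $C=(\lambda_j^k)_{j\in S,\,k=0,\dots,s}$ the Vandermonde matrix in the active eigenvalues. The assumption that only one eigenvector per eigenvalue occurs makes the $\lambda_j$, $j\in S$, pairwise distinct, so $C$ has rank $s$ and a one-dimensional kernel spanned by the coefficient vector of $p(\lambda)=\prod_{j\in S}(\lambda-\lambda_j)$. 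Since $\sum_{\sigma\in R}r_\sigma=s$, the matrix $B$ is square, and its regularity forces $\ker\tilde H=\ker C$. The active eigenvalues are then recovered as the roots of the kernel polynomial, after which the coefficients $\alpha_{\sigma,j}$, and hence the representation of $f$, follow by solving the resulting linear Vandermonde system, exactly as in Prony's method.

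The routine parts are the exponential-sum bookkeeping and the Hankel/Vandermonde factorization, which run verbatim as in the graph case. The one genuinely new point, and the step I would treat most carefully, is the locality claim for $T=L_k^\textnormal{UP}$: one must check that the up-adjacency relation is subsumed by the distance $N(\sigma,\cdot)$ of the excerpt, which is phrased through the condition $|\tau_i\cap\tau_{i+1}|=k$, so that this single neighbourhood notion simultaneously controls the support growth of both $L_k^\textnormal{UP}$ and $L_k^\textnormal{DN}$. Granting this, the recovery guarantee reduces to the regularity of $B$ precisely as claimed.
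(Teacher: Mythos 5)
Your proof is correct and follows exactly the route the paper intends: the paper states this theorem without its own proof, deferring to the matrix $B$ of \cref{generalcase}, and your argument is precisely the adaptation of that proof (iterated-operator exponential sums $g_\sigma(\ell)=(T^\ell f)(\sigma)$, the stacked Hankel factorization $\tilde H=B\cdot C$, and the kernel argument $\ker\tilde H=\ker C$) to the simplicial setting. Your explicit verification that both $L_k^\textnormal{UP}$ and $L_k^\textnormal{DN}$ couple $k$-faces only at distance one in the paper's face metric, and that ``one eigenvector per eigenvalue'' yields the pairwise distinct nodes needed for the Vandermonde step, supplies exactly the details the paper leaves implicit.
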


Reddy and Chepuri \cite{Reddy} claim that any sparse signal can be uniquely recovered by sampling in some neighbourhood of a simplex. However, they do not give explicit algorithms and do not use the smaller sampling trick using $f^\textnormal{UP}$ and $f^\textnormal{DN}$. Also the full recovery of the kernel in any case seems to be daring.

\bibliographystyle{abbrv}
\bibliography{refs}

\end{document}